\documentclass[preprint,10pt]{todarticle}

\usepackage{amssymb,amsfonts,amsmath,amscd,amsthm,latexsym}
\usepackage{graphicx}
\usepackage{mathptmx}      
\usepackage[english]{babel}
\usepackage{geometry}
\usepackage[all]{xy}
\usepackage{appendix}
\usepackage[active]{srcltx}
\usepackage{color}




\newtheorem{theo}{Theorem}[section]

\newtheorem{lemm}[theo]{Lemma}
\newtheorem{prop}[theo]{Proposition}
\newtheorem{subsec}[theo]{Situation}
\theoremstyle{definition}
\newtheorem{defi}[theo]{Definition}

\newtheorem{rema}[theo]{Remark}

\newcommand\numberthis{\addtocounter{equation}{1}\tag{\theequation}}
\newcommand{\bbZ}{\mathbb{Z}}
\newcommand{\Ker}{\mathrm{Ker}}
\newcommand{\Ima}{\mathrm{Im}}
\newcommand{\id}{\mathrm{id}}
\newcommand{\Hc}{\mathrm{H}}
\newcommand{\HH}{\mathrm{HH}}
\newcommand{\Hom}{\mathrm{Hom}}




\begin{document}

\begin{frontmatter}



\title{BD algebras and group cohomology}

\author[UT]{Constantin-Cosmin Todea \corref{cor1}}
\ead{Constantin.Todea@math.utcluj.ro}
\cortext[cor1]{Corresponding author}
\address[UT]{Department of Mathematics, Technical University of Cluj-Napoca, Str. G. Baritiu 25,  Cluj-Napoca 400027, Romania}
\fntext[label2]{This work was supported by a grant of the Ministry of Research, Innovation and Digitization, CNCS/CCCDI–-UEFISCDI, project number PN-III-P1-1.1-TE-2019-0136, within PNCDI III}
\begin{abstract}  BD algebras (Beilinson-Drinfeld algebras) are algebraic structures which are defined similarly to  BV algebras (Batalin-Vilkovisky algebras). The equation defining the BD operator has the same structure
as the equation for BV algebras, but the BD operator is increasing with degree +1. We obtain methods of constructing  BD algebras  in the context of group cohomology.
\end{abstract}

\begin{keyword}
group, cohomology, connecting homomorphism, DG-algebra, BV algebra, BD algebra.
\MSC[2020] 20J06 \ 16E40 \ 16E45
\end{keyword}

\end{frontmatter}

\section{Introduction}\label{sec1}
For any associative algebra  the associated Hochschild cohomology group  has a rich structure. It is a graded commutative algebra via the cup product and, 
it has a graded Lie bracket of degree $-1$ obtaining what is now called Gerstenhaber algebra \cite{Ge}. A new structure in Hochschild theory, BV algebra,  has been extensively studied in topology and mathematical physics for a long time, see \cite{Get} to name just one reference.  More recently, this so-called
Batalin-Vilkovisky structure was introduced into algebra. Roughly speaking a Batalin-Vilkovisky structure is an
operator on Hochschild cohomology which squares to zero and which, together with the cup product,
can express the Gerstenhaber bracket. A BV structure only exists over Hochschild cohomology of certain special
classes of algebras. In \cite{Tr} the author defines a BV structure on the Hochschild cohomology of symmetric algebras. We mention some references where BV algebras specialized for Hochschild cohomology of group algebras are studied: \cite{AnDu}, in which the authors determine the BV algebra structure of the Hochschild cohomology of group algebras of cyclic groups of prime order; \cite{BeKeLi,LiZh}, which are dedicated to  explicit descriptions of the BV operators for Hochschild cohomology of group algebras and computations for some particular finite groups;  etc.

The structures called BD algebras (Beilinson-Drinfeld algebras), which appears in mathematical physics,  have a superficial similarity with BV algebras and are both related to BV-BRST formalism in physics, see \cite[Definition A.3.5]{CoGw}, the following remarks in \cite[p. 289]{CoGw} and the discussion in \cite{BVBD}.  The equation defining the BD operator $\Delta_{BD}$ has the same structure as the equation for  BV algebras if we replace the BV operator $\Delta_{BV}$ by $\Delta_{BD}$  (and remove some factor  denoted $\overline{h}$ in \cite{CoGw}). But since  $\Delta_{BD}$ is of degree $+1$ the similarity is only superficial, as mentioned in \cite{BVBD}. Section \ref{sec3} is devoted to recall these structures under the unified framework of Poisson $r$-algebras (where $r$ is a nonnegative integer) and $r$-Batalin-Vilkovisky algebras, see \cite[Section 1]{CaFiLo}, but with the useful change of indexing due to Costello and Gwilliam \cite[Section A.3.2]{CoGw}.

The main goal of this article is to introduce BD algebra structure from mathematical physics into algebra. Although most of the results of this paper work over any group we prefer finite groups. Let $G$ be a finite group, $k$ be a commutative ring,  $K$ be a field and $p$ be any prime. The article is organized as follows. In Section \ref{sec2} we generalize the fact that group cohomology ring $\Hc^*(G, \bbZ/p\bbZ)$ together with a connecting homomorphism  (called Bockstein map), which is induced by a given short exact sequence of trivial $\bbZ G$-modules, is a DG-algebra, see \cite[Lemma 4.3.2 and 4.3.3]{BeII}. Let $A$ be a $kG$-module which is also a $k$-algebra on which $G$ acts as automorphisms. The main result of this section is Theorem \ref{thm1}, in which  we show that under some conditions, collected in Situation \ref{sit*}, there are connecting homomorphisms $\theta_{G,A}^*:
\Hc^*(G,A)\rightarrow\Hc^{*+1}(G,A)$, see Remark \ref{rem22},  such that $(\Hc^*(G,A),\theta_{G,A}^*)$ becomes a DG-algebra. In Section \ref{sec3}, as mentioned above, we recall the definition of  BD algebras compared to that of  BV algebras. With the help  of some ideas of Section \ref{sec2}, Section \ref{sec4} is devoted to collect in   Situation \ref{sit**} conditions and assumptions which allow us to define a family of $k$-linear maps $\{\theta_{G,A,x}^*\}_{x\in G}$ such that  $(\Hc^*(G,A),\theta_{G,A,x}^*)$ is a DG-algebra for any $x\in G$. Moreover $\theta_{G,A,xy}^*=\theta_{G,A,x}^*+\theta_{G,A,y}^*$ for any $x,y\in G$, see Proposition \ref{prop42}. Starting with Definition \ref{defBD43} we work with  a $KG$-module $A$ which is also a $K$-algebra on which $G$ acts as automorphisms. Under the assumptions which are mentioned in Definition \ref{defBD43} we define a multiplication "$\cdot$", a bilinear map "$[\cdot,\cdot]$" and an operator $\Delta_{BD}$ such that in the second main result of this paper, Theorem \ref{thm2}, we show that if $G$ is abelian then $(KG\otimes \Hc^*(G,A),\cdot, [\cdot,\cdot],\Delta_{BD})$ is a BD algebra.
In last section we give short exact sequences of  $\bbZ C_3$-modules which are in Situation \ref{sit**} and for which Theorem \ref{thm2} can be applied. We will show in Subsection \ref{subsec53} that there is a  BD algebra structure on $\mathbb{F}_3C_3\otimes \Hc^*(C_3,\mathbb{F}_3)$. By \cite{CiSo} there is a canonical identification $\mathbb{F}_3C_3\otimes \Hc^*(C_3,\mathbb{F}_3)\cong \HH^*(\mathbb{F}_3C_3)$. It follows that we can transport, through this identification, the BD algebra structure on the Hochschild cohomology $\HH^*(\mathbb{F}_3C_3)$ of the cyclic $3$ group algebra.  If we can develop a theory for BD operators on Hochschild cohomology of various group algebras (of other classes of finite dimensional algebras) is an objective which remains open for future research.

We adopt the following notations and conventions. If $(C^*,d)$ is a cochain complex we call $d$ a differential and we say that $d:C^*\rightarrow C^{*+1}$ is of degree $+1$. Shifting by $r$ means $C^*[r]:=C^{*+r}$ and we call $d'$ a map of degree $r$ if $d':C^*\rightarrow C^{*+r}$. When $\alpha$ is a homogeneous element of $C^*$ such that $\alpha\in C^n$ we sometimes denote its degree by $|\alpha|=n$. If $S$ is a subset of $A$ and $f:A\rightarrow B$ is a map, we denote by $\id_A:A\rightarrow A$ the identity map on $A$ and
by $f|_{S}:S\rightarrow B$ the restriction of $f$ to $S.$
We shall use the notation $\Hc^*(G,A)$  with two meanings:  on one hand we have $\Hc^*(G,A)=\bigoplus_{n\geq 0}\Hc^n(G,A)$ as a $\bbZ$-graded $k$-module, on the other hand we could refer
to $\Hc^*(G,A)$ as the group cohomology in degree $*$. The reader should understand the difference in the respective context.

\section{Group cohomology with nontrivial coefficients as DG-algebras}\label{sec2}
Let $m,n$ be nonnegative integers and let $A,B,C$ be three  $kG$-modules.

\subsection{Connecting homomorphisms in group cohomology,  explicitly}\label{sec21}
We consider the coboundary differential
$$\partial_A^n:C^n(G,A)\rightarrow C^{n+1}(G,A),$$ $$\partial_A^n(\varphi)(g_1,\ldots,g_{n+1})=
g_1\varphi(g_2,\ldots,g_{n+1})+\sum_{i=1}^n(-1)^i\varphi(g_1,\ldots,g_ig_{i+1},\ldots,g_{n+1})+(-1)^{n+1}\varphi(g_1,...,g_n),$$
for any $g_1,\ldots,g_{n+1}\in G$, $\varphi\in C^n(G,A)$; where $$C^n(G,A)=\{\varphi|\varphi:G^{\times n}\rightarrow A \ \text{set map}\}.$$
Group cohomology of $G$ with coefficients in $A$ is 
$$\Hc^n(G,A):=\Ker \partial_A^n / \Ima \partial_A^{n-1}\overset{not}{=}Z^n(G,A)/B^n(G,A),\quad n\in\bbZ, n\geq 1.$$
If $n=0$ then $C^0(G,A)$ is identified with $A$ and $$\Hc^0(G,A)=\Ker\partial_A^0=A^G.$$

Let \begin{equation}\label{eq1}
\xymatrix{0\ar[r]&A\ar[r]^-{\iota}
&B\ar[r]^-{\pi}&C\ar[r] &0}
\end{equation}  be a short exact sequence of $kG$-modules.
Since $\iota$ is injective there is an isomorphism of $kG$-modules 
$$\iota':A\rightarrow\Ker\pi,\iota'(a)=\iota(a),$$
for any $a\in A.$
Let $r:\Ker\pi\rightarrow A$ be the inverse of $\iota'$. 
Since $\pi$ is surjective, it has a section (right inverse set map) which we denote by $s:C\rightarrow B$, hence
$\pi\circ s=\id_C$.
The following lemma is a well-known ingredient used to define explicitly the connecting homomorphism induced in group cohomology by the Long Exact Sequence Theorem.
\begin{lemm}\label{lem22}
With the notations above we have
\begin{itemize}
    \item[(a)] If $\varphi\in Z^n(G,C)$ then $\Ima\partial_B^n(s\circ\varphi)\subseteq\Ker\pi$;
    \item[(b)] If $\varphi\in Z^n(G,C)$ then $r\circ \partial_B^n(s\circ\varphi)\in Z^{n+1}(G,A)$.
\end{itemize}
\end{lemm}
By the Long Exact Sequence Theorem applied to (\ref{eq1}) there is a "connecting homomorphism", which we denote by
$$\theta_{G,C,A}^n:\Hc^n(G,C)\rightarrow \Hc^{n+1}(G,A),$$
$$\overline{\varphi}\mapsto\overline{\theta_{G,C,A}^n(\varphi)},\varphi\in Z^n(G,C).$$
Lemma \ref{lem22} gives us the opportunity to define $\theta_{G,C,A}^*$ explicitly as follows
$$\theta_{G,C,A}^n(\varphi)=r\circ\partial_B^n(s\circ\varphi),$$
for any $\varphi\in Z^n(G,C)$. 

\begin{rema}\label{rem22}
 If $A=C$ for shortness we denote $\theta_{G,C,A}^*$ by $\theta_{G,A}^*$.
\end{rema}

\subsection{Reminder on cup-products in group cohomology}\label{subsec22}
The cup product is
    $$\cup:\Hc^m(G,A)\otimes \Hc^n(G,B)\rightarrow \Hc^{m+n}(G,A\otimes B),$$
    $$(\overline{\varphi}\in \Hc^m(G,A), \overline{\psi}\in \Hc^n(G,B))\mapsto\overline{\varphi\cup\psi}\in \Hc^{m+n}(G,A\otimes B),$$
    $$(\varphi\cup\psi)(g_1,\ldots,g_m,g_{m+1},\ldots,g_{m+n}):=\varphi(g_1,\ldots,g_m)\otimes g_1 \ldots g_m \psi (g_{m+1},\ldots, g_{m+n}),$$
    for any $\varphi\in Z^m(G,A),\psi\in Z^n(G,B)$ and any $g_1,\ldots,g_{m+n}\in G.$
    
 If  $\mu:A\otimes B\rightarrow C$ is a $kG$-homomorphism we obtain the \textit{cup product with respect to the pairing} $\mu$
    $$\cup:\Hc^m(G,A)\otimes \Hc^n(G,B)\rightarrow \Hc^{m+n}(G,C).$$
    
 If $C=B=A$ is a $k$-algebra on which $G$ acts as automorphisms and $\mu $ is the structure multiplicative map (i.e. $\mu:A\otimes A\rightarrow A\ \mu(a\otimes b)= ab$) then there is
    $$\Hc^m(G,A)\otimes \Hc^n(G,A)\overset{\cup}{\rightarrow } \Hc^{m+n}(G,A\otimes A)\overset{\mu}{\rightarrow}\Hc^{m+n}(G,A),$$
     $$(\overline{\varphi}\in \Hc^m(G,A), \overline{\psi}\in \Hc^n(G,A))\mapsto\overline{\varphi\cup\psi}\in \Hc^{m+n}(G,A),$$
    $$(\varphi\cup\psi)(g_1,\ldots,g_{m+n}):=\varphi(g_1,\ldots,g_m)(g_1 \ldots g_m \psi (g_{m+1},\ldots ,g_{m+n})),$$
    for any $\varphi\in Z^m(G,A),\psi\in Z^n(G,B)$ and $g_1,\ldots,g_{m+n}\in G$.
    Thus $(\underset{n\geq 0}{\bigoplus}\Hc^n(G,A),\cup)$ becomes a graded $k$-algebra.

\subsection{ $\theta_{G,A}^*$ as graded derivation}
We need to assume a consistent number of conditions which we collect in the following lines.
Clearly if $A$ is a $k$-algebra on which $G$ is acting $k$-linearly then $A$ inherits a structure of left $kG$-module
induced by this action.

\begin{subsec}\label{sit*}
Let $A,B$ be $k$-algebras with $G$ acting by automorphisms on $A$
such that there is a short exact sequence of $kG$-modules $\xymatrix{0\ar[r]&A\ar[r]^-{\iota}
&B\ar[r]^-{\pi}&A\ar[r] &0}$ 
for which:
\begin{itemize}
    \item $\Ker \pi$ is an ideal in $B$;
    \item there exist two maps  $r:\Ker\pi\rightarrow A$, the inverse of the isomorphism $\iota':A\rightarrow \Ker\pi$,
    and $s:A\rightarrow B$ a section of $\pi$, satisfying:
   
          \begin{equation}\label{eq2setup*} r(b_1(g b_2)) = r(b_1)(g\pi(b_2)), \end{equation}
        \begin{equation}\label{eq3setup*} r(b_1'(g b_2')) = \pi(b_1')(g r(b_2')),\end{equation}
         \begin{equation}\label{eq4setup*} s(a_1(g a_2)) - s(a_1)(g s(a_2))\in\Ker\pi, \end{equation}
 for any  $b_1,b_2'\in\Ker\pi, b_1',b_2\in B,  a_1\in A, a_2\in A,\ g\in G.$
\end{itemize}
\end{subsec}
\begin{rema}\label{rem24} With the above notations, if $\pi$ is a homomorphism of $k$-algebras then Situation \ref{sit*} is shorter: relation (\ref{eq4setup*}) is automatically satisfied and $\Ker \pi$ is obviously an ideal in $B$. 
\end{rema}
\begin{lemm}\label{lem3} Assume we are in Situation \ref{sit*}.
\begin{itemize}
    \item[(a)] If $\alpha\in C^m(G,\Ker\pi), \beta\in C^n(G,B)$ then:
    \begin{itemize}
        \item[(i)] $\alpha\cup\beta\in C^{m+n}(G,\Ker\pi);$
        \item[(ii)] $r\circ(\alpha\cup\beta) = (r\circ\alpha)\cup(\pi\circ\beta).$
    \end{itemize}
    \item[(b)] If $\alpha\in C^m(G,B),\beta\in C^n(G,\Ker\pi)$ then:
    \begin{itemize}
       \item[(i)] $\alpha\cup\beta\in C^{m+n}(G,\Ker\pi)$;
       \item[(ii)] $r\circ(\alpha\cup\beta) = (\pi\circ\alpha)\cup(r\circ\beta).$
    \end{itemize}
    \item[(c)] If $\alpha\in C^m(G,A),\beta\in C^n(G,A)$ then for any $g_1,\ldots,g_{m+n}\in G$ we have
    $$(s\circ(\alpha\cup\beta)-(s\circ\alpha)\cup(s\circ\beta))(g_1,\ldots,g_m,\ldots,g_{m+n})\in\Ker\pi.$$
\end{itemize}
\end{lemm}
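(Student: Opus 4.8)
The plan is to prove all three parts pointwise: I evaluate each cup product on an arbitrary tuple $(g_1,\ldots,g_{m+n})\in G^{\times(m+n)}$ and unwind the definition recalled in Subsection \ref{subsec22}. The crucial observation is that the three relations (\ref{eq2setup*}), (\ref{eq3setup*}) and (\ref{eq4setup*}) of Situation \ref{sit*} are tailored precisely to mediate between the multiplication of $B$ (which governs a cup product having a factor valued in $\Ker\pi\subseteq B$) and the multiplication of $A$ (which governs a cup product of two factors valued in $A$), via the maps $r$, $\pi$ and $s$. Thus the whole argument is bookkeeping: I only need to track which algebra's multiplication and which $G$-action enters each cup product, and then match the arguments of these relations with the cochain values and the group element $g:=g_1\cdots g_m$.

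For part (a), the formula gives $(\alpha\cup\beta)(g_1,\ldots,g_{m+n})=\alpha(g_1,\ldots,g_m)\,(g\cdot\beta(g_{m+1},\ldots,g_{m+n}))$, a product in $B$. Since $\alpha(g_1,\ldots,g_m)\in\Ker\pi$ and $\Ker\pi$ is an ideal of $B$, this product lies in $\Ker\pi$, which is (i). For (ii) I apply $r$ and invoke (\ref{eq2setup*}) with $b_1=\alpha(g_1,\ldots,g_m)\in\Ker\pi$ and $b_2=\beta(g_{m+1},\ldots,g_{m+n})\in B$, turning $r\bigl(\alpha(\ldots)\,(g\cdot\beta(\ldots))\bigr)$ into $r(\alpha(\ldots))\,(g\cdot\pi(\beta(\ldots)))$; this is exactly the value of $(r\circ\alpha)\cup(\pi\circ\beta)$ on the same tuple, that cup product being formed in $A$. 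Part (b) is the mirror image: here $g\cdot\beta(\ldots)\in\Ker\pi$ because $\Ker\pi$ is a $kG$-submodule, and multiplying on the left by $\alpha(\ldots)\in B$ keeps us in the ideal $\Ker\pi$, which gives (i); for (ii) I apply (\ref{eq3setup*}) with $b_1'=\alpha(g_1,\ldots,g_m)\in B$ and $b_2'=\beta(g_{m+1},\ldots,g_{m+n})\in\Ker\pi$ to recover $(\pi\circ\alpha)\cup(r\circ\beta)$.

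For part (c) I compute the difference at $(g_1,\ldots,g_{m+n})$: the term $s\circ(\alpha\cup\beta)$ yields $s\bigl(\alpha(\ldots)\,(g\cdot\beta(\ldots))\bigr)$ with the product taken in $A$, while $(s\circ\alpha)\cup(s\circ\beta)$ yields $s(\alpha(\ldots))\,(g\cdot s(\beta(\ldots)))$ with the product taken in $B$. Their difference lands in $\Ker\pi$ by a direct application of (\ref{eq4setup*}) with $a_1=\alpha(g_1,\ldots,g_m)$ and $a_2=\beta(g_{m+1},\ldots,g_{m+n})$.

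None of the three parts presents a genuine obstacle; the effort lies entirely in consistent notation. The one point demanding care, and the place a reader is most likely to stumble, is the dual role of the $G$-action and of the ambient multiplication: in each cup product I must use the action and product of whichever module ($A$ or $B$) the relevant factors take values in, and then check that the appropriate relation from Situation \ref{sit*} converts these into the action and product of the target. Because $\pi$ and $r$ are $kG$-module maps and $\Ker\pi$ is a $G$-stable ideal, these conversions are exactly what (\ref{eq2setup*}), (\ref{eq3setup*}) and (\ref{eq4setup*}) encode, so no hypothesis beyond Situation \ref{sit*} is required.
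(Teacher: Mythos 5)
Your proposal is correct and follows essentially the same route as the paper's proof: pointwise evaluation of the cup products, the ideal and $kG$-submodule properties of $\Ker\pi$ for the membership claims, and direct application of relations (\ref{eq2setup*}), (\ref{eq3setup*}), (\ref{eq4setup*}) with exactly the substitutions the paper uses. Your treatment of part (b) is in fact slightly more explicit than the paper's, which only remarks that it is analogous to (a).
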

\begin{proof}
  Let $g_1,\ldots,g_m,\ldots,g_{m+n}\in G$. 
  \begin{itemize}
      \item[(a)]
For statement (i)  we have $$(\alpha\cup\beta)(g_1,\ldots,g_{m+n})\overset{\ref{subsec22}}{=}\alpha(g_1,\ldots,g_m)(g_1\ldots g_m\beta(g_{m+1},\ldots,g_{m+n}))$$ which is in $\Ker\pi$.

 For statement (ii) we obtain
 \begin{align*}(r\circ(\alpha\cup\beta))(g_1,\ldots,g_{m+n})&=r((\alpha\cup\beta)(g_1,\ldots,g_m,\ldots,g_{m+n})) \\&= r(\alpha(g_1,\ldots,g_m)(g_1\ldots g_m\beta(g_{m+1},\ldots,g_{m+n})))
          \\&\overset{(\ref{eq2setup*})}{=}(r\circ\alpha)(g_1,\ldots,g_m)(g_1\ldots g_m(\pi\circ\beta)(g_{m+1},\ldots,g_{m+n}))
          \\&=((r\circ\alpha)\cup(\pi\circ\beta))(g_1,\ldots,g_{m+n}).
   \end{align*}
   \item[(b)] The  proof is analogous to the proof of statement (a). 
      \item[(c)] 
      
 \begin{align*}&\pi((s\circ(\alpha\cup\beta)-(s\circ\alpha)\cup(s\circ\beta))(g_1,\ldots,g_m,\ldots,g_{m+n}))
      \\&=\pi(s(\alpha(g_1,\ldots, g_m)(g_1\ldots g_m\beta(g_{m+1},\ldots,g_{m+n})))-s(\alpha)(g_1,\ldots,g_{m}))(g_1\ldots g_{m}(s(\alpha)(g_{m+1},\ldots,g_{m+n})))
      \\ &=0,
      \end{align*}
      where the second equality holds by the definition of cup product and the last last equality is true by (\ref{eq4setup*}) of Situation \ref{sit*}.
  \end{itemize}
  \end{proof}
\begin{prop}\label{prop4thetagraded}
Assume Situation \ref{sit*} is satisfied. Then $\theta_{G,A}^*:\Hc^*(G,A)\rightarrow \Hc^{*+1}(G,A)$ is a graded derivation.
\end{prop}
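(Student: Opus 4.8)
The statement to prove is that $\theta_{G,A}^*$ satisfies the graded Leibniz rule
$$\theta_{G,A}^{m+n}(\overline{\varphi}\cup\overline{\psi})=\theta_{G,A}^{m}(\overline{\varphi})\cup\overline{\psi}+(-1)^{m}\,\overline{\varphi}\cup\theta_{G,A}^{n}(\overline{\psi})$$
for classes represented by $\varphi\in Z^m(G,A)$ and $\psi\in Z^n(G,A)$, the sign $(-1)^m$ being forced because $\theta_{G,A}^*$ has degree $+1$. The plan is to establish the corresponding identity at the level of cochains, using the explicit description $\theta_{G,A}^n(\varphi)=r\circ\partial_B^n(s\circ\varphi)$, and to pass to cohomology classes only at the very end. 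I write $\Phi=s\circ\varphi$ and $\Psi=s\circ\psi$, so that $\theta_{G,A}^m(\varphi)=r\circ\partial_B\Phi$ and $\theta_{G,A}^n(\psi)=r\circ\partial_B\Psi$, where by Lemma \ref{lem22}(a) both $\partial_B\Phi$ and $\partial_B\Psi$ are $\Ker\pi$-valued.

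The first difficulty is that $s$ is only a set-theoretic section, so $s\circ(\varphi\cup\psi)\neq\Phi\cup\Psi$ in general. This is precisely what Lemma \ref{lem3}(c) controls: the cochain $\gamma:=s\circ(\varphi\cup\psi)-\Phi\cup\Psi$ takes its values in $\Ker\pi$. Since $r$ is the inverse of the $kG$-module isomorphism $\iota'$, it commutes with the coboundary on $\Ker\pi$-valued cochains, whence $r\circ\partial_B\gamma=\partial_A(r\circ\gamma)$. Thus the contribution of $\gamma$ to $\theta_{G,A}^{m+n}(\varphi\cup\psi)=r\circ\partial_B(s\circ(\varphi\cup\psi))$ is the coboundary $\partial_A(r\circ\gamma)$, which vanishes in $\Hc^{m+n+1}(G,A)$ and may therefore be discarded.

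It remains to treat the main term $r\circ\partial_B(\Phi\cup\Psi)$. Here I would apply the cochain-level graded Leibniz rule for the cup-product differential,
$$\partial_B(\Phi\cup\Psi)=(\partial_B\Phi)\cup\Psi+(-1)^m\,\Phi\cup(\partial_B\Psi),$$
noting that by Lemma \ref{lem3}(a)(i) and (b)(i) both summands are $\Ker\pi$-valued, so that $r$ distributes over them. Applying $r$ and invoking the two compatibility identities of Lemma \ref{lem3}, part (a)(ii) gives $r\circ((\partial_B\Phi)\cup\Psi)=(r\circ\partial_B\Phi)\cup(\pi\circ\Psi)=\theta_{G,A}^m(\varphi)\cup\psi$, using $\pi\circ s=\id$, and part (b)(ii) gives $r\circ(\Phi\cup(\partial_B\Psi))=(\pi\circ\Phi)\cup(r\circ\partial_B\Psi)=\varphi\cup\theta_{G,A}^n(\psi)$. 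Combining this with the previous paragraph yields, modulo coboundaries, exactly the graded Leibniz identity, which proves the claim.

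I expect the main obstacle to be the cochain-level graded Leibniz rule for $\partial_B$ with respect to the cup product, together with its sign $(-1)^m$: this is the one step not already packaged into the hypotheses, and its verification is where the interaction between the $G$-action and the multiplication on $B$ must be addressed. The secondary point requiring care, and the reason Lemma \ref{lem3} is stated with the separate parts (i), is the constant need to check that every intermediate cochain --- $\gamma$, $(\partial_B\Phi)\cup\Psi$ and $\Phi\cup(\partial_B\Psi)$ --- genuinely takes values in $\Ker\pi$, since only then is it legitimate to apply $r$ termwise; the relations (\ref{eq2setup*})--(\ref{eq4setup*}) of Situation \ref{sit*} are calibrated precisely so that these $\Ker\pi$-membership checks and the passage of $r$ through a cup product both succeed.
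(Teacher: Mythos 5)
Your proof is correct and takes essentially the same route as the paper's: both decompose $s\circ(\varphi\cup\psi)$ into $(s\circ\varphi)\cup(s\circ\psi)$ plus a $\Ker\pi$-valued correction via Lemma \ref{lem3}(c), apply the graded Leibniz rule for $\partial_B$, pull $r$ through the two resulting cup products via Lemma \ref{lem3}(a)(ii) and (b)(ii) together with $\pi\circ s=\id$, and dispose of the correction term as the coboundary $\partial_A(r\circ\gamma)$ using that $r$ is a $kG$-homomorphism on $\Ker\pi$. The one step you flag as an obstacle --- the cochain-level Leibniz rule for $\partial_B$ --- is exactly the step the paper also invokes without proof, as ``the well-known fact that $\partial_B$ is a graded derivation,'' so there is no substantive divergence.
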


\begin{proof}
Let $\overline{\alpha}\in \Hc^m(G,A),\overline{\beta}\in \Hc^n(G,A),\alpha\in Z^m(G,A),\beta\in Z^n(G,A),$ and  $g_1,\ldots,g_{m+n+1}\in G $.\\
By Lemma \ref{lem3}, (c) there is a map $\psi\in C^{m+n}(G,\Ker\pi)$ 
such that $$s\circ(\alpha\cup\beta) = (s\circ\alpha)\cup(s\circ\beta)+\psi.$$
The following assertions hold by applying heavily Lemma \ref{lem3} (a), (b) and the well-known fact that $\partial_B$ is a graded derivation
\begin{align*}
\theta_{G,A}^{m+n}&(\alpha\cup\beta)(g_1,\ldots,g_{m+n+1})
\\&=(r\circ\partial_B^{m+n}(s\circ(\alpha\cup\beta)))(g_1,\ldots,g_{m+n+1})
\\&=r(\partial_B^{m+n}(s\circ(\alpha\cup\beta)))(g_1,\ldots,g_{m+n+1})
\\&=(r\circ\partial_B^{m+n}((s\circ\alpha)\cup(s\circ\beta)))(g_1,\ldots,g_{m+n+1})+(r\circ\partial_B^{m+n}(\psi))(g_1,\ldots,g_{m+n+1})
\\&=(r\circ(\partial_B^m(s\circ\alpha)\cup s\circ\beta+(-1)^m(s\circ\alpha)\cup\partial_B^n(s\circ\beta)))(g_1,\ldots,g_{m+n+1})
\\&+(r\circ\partial_B^{m+n}(\psi))(g_1,\ldots,g_{m+n+1})
\\&=(r\circ(\partial_B^m(s\circ\alpha)\cup (\pi\circ s\circ\beta)+(-1)^m(\pi\circ s\circ\alpha)\cup(r\circ \partial_B^n(s\circ\beta))))(g_1,\ldots,g_{m+n+1})
\\&+(r\circ\partial_B^{m+n}(\psi))(g_1,\ldots,g_{m+n+1})
\\&=(\theta_{G,A}^m(\alpha)\cup\beta +(-1)^m\alpha\cup\theta_{G,A}^n(\beta)+r\circ\partial_B^{m+n}(\psi))(g_1,\ldots,g_{m+n+1})
\end{align*}
Next, we obtain
\begin{align*}
(r\circ&\partial_B^{m+n}(\psi))(g_1,\ldots,g_{m+n+1})
\\&=r(g_1\psi(g_2,\ldots,g_{m+n+1})+\ldots+(-1)^{m+n+2}\psi(g_1,\ldots,g_{m+n+1}))
\\&=g_1(r\circ\psi)(g_2,\ldots,g_{m+n+1})+\ldots+(-1)^{m+n+1}(r\circ\psi)(g_1,\ldots,g_{m+n+1})
\\&=\partial_A^{m+n}(r\circ\psi)(g_1,\ldots,g_{m+n+1}),
\end{align*}
hence
$r\circ\partial_B^{m+n}(\psi)\in B^{m+n}(G,A).$
It follows $$\theta_{G,A}^{m+n}(\alpha\cup\beta)-\theta_{G,A}^m(\alpha)\cup\beta - (-1)^m\alpha\cup\theta_{G,A}^n(\beta)\in B^{m+n}(G,A),$$
thus
$$\theta_{G,A}^{m+n}(\overline{\alpha}\cup\overline{\beta})=\theta_{G,A}^m(\overline{\alpha})\cup\overline{\beta}+(-1)^m\overline{\alpha}\cup\theta_{G,A}^n(\overline{\beta}).$$

\end{proof}
\begin{theo}\label{thm1}
  Let $\xymatrix{0\ar[r]&A\ar[r]^-{\iota}
&B\ar[r]^-{\pi}&A\ar[r] &0}$ be a short exact sequence in Situation \ref{sit*}. If there is a commutative diagram 
\begin{equation} \label{eq5}\xymatrix{ 0\ar[r]&A\ar[r]^-{\iota}&B\ar[r]^{\pi}&A \ar[r]&0\\
                                            0\ar[r]&C\ar[r]^{\iota_0}\ar[u]^-{\pi_0}&C\ar[r]^-{\pi_0}
                                            \ar[u]^{\pi_0'}&A\ar[r]\ar@{=}[u]&0 },
\end{equation}
  where $\xymatrix{0\ar[r]&C\ar[r]^-{\iota_0}
&C\ar[r]^-{\pi_0}&A\ar[r] &0}$ is a new  short exact sequence of $kG$-modules, then the triple $(\Hc^*(G,A),\cup,\theta_{G,A}^*)$ is a DG-algebra.
\end{theo}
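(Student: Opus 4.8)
The plan is to unwind the definition: the triple $(\Hc^*(G,A),\cup,\theta_{G,A}^*)$ is a DG-algebra exactly when three things hold — the cup product makes $\Hc^*(G,A)$ a graded $k$-algebra, the operator $\theta_{G,A}^*$ is a degree $+1$ map obeying the graded Leibniz rule, and $\theta_{G,A}^{*+1}\circ\theta_{G,A}^*=0$. The first point is recorded in Subsection \ref{subsec22}, and the graded derivation property is precisely Proposition \ref{prop4thetagraded}. Hence the only thing left to establish is that $\theta_{G,A}^*$ squares to zero, and this is exactly where the commutative diagram (\ref{eq5}) is meant to be used.

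First I would read (\ref{eq5}) as a morphism of short exact sequences of $kG$-modules from the bottom row $0\to C\xrightarrow{\iota_0}C\xrightarrow{\pi_0}A\to 0$ to the top row $0\to A\xrightarrow{\iota}B\xrightarrow{\pi}A\to 0$, the vertical components being $\pi_0\colon C\to A$ on the left, $\pi_0'\colon C\to B$ in the middle and $\id_A$ on the right; commutativity of (\ref{eq5}) amounts to $\iota\circ\pi_0=\pi_0'\circ\iota_0$ and $\pi\circ\pi_0'=\pi_0$. Write $\theta_{G,A,C}^*\colon\Hc^*(G,A)\to\Hc^{*+1}(G,C)$ for the connecting homomorphism attached to the bottom row. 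Naturality of the Long Exact Sequence Theorem with respect to this morphism produces a commuting square whose top edge is $\theta_{G,A,C}^n$, whose bottom edge is $\theta_{G,A}^n$, whose left vertical edge is the identity of $\Hc^n(G,A)$ induced by $\id_A$, and whose right vertical edge is the map $(\pi_0)_*\colon\Hc^{n+1}(G,C)\to\Hc^{n+1}(G,A)$ induced by $\pi_0$. Reading off this square yields
$$\theta_{G,A}^n=(\pi_0)_*\circ\theta_{G,A,C}^n\qquad\text{for every } n\ge 0.$$

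With this identity the vanishing is immediate. Applying it in degrees $n$ and $n+1$, I would factor
$$\theta_{G,A}^{n+1}\circ\theta_{G,A}^n=(\pi_0)_*\circ\bigl(\theta_{G,A,C}^{n+1}\circ(\pi_0)_*\bigr)\circ\theta_{G,A,C}^n,$$
and then invoke exactness of the Long Exact Sequence of the bottom row at $\Hc^{n+1}(G,A)$: there the image of $(\pi_0)_*\colon\Hc^{n+1}(G,C)\to\Hc^{n+1}(G,A)$ equals the kernel of $\theta_{G,A,C}^{n+1}$, so the bracketed middle composite is zero and therefore $\theta_{G,A}^{n+1}\circ\theta_{G,A}^n=0$. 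Together with Proposition \ref{prop4thetagraded} and the graded algebra structure of Subsection \ref{subsec22}, this shows that $(\Hc^*(G,A),\cup,\theta_{G,A}^*)$ is a DG-algebra.

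The step I expect to demand the most care is justifying the naturality identity $\theta_{G,A}^n=(\pi_0)_*\circ\theta_{G,A,C}^n$. While naturality of connecting homomorphisms under a morphism of short exact sequences is classical, this paper works with the explicit cochain-level formula $\theta_{G,C,A}^n(\varphi)=r\circ\partial_B^n(s\circ\varphi)$ of Subsection \ref{sec21}, which depends on chosen sections and retractions for each row. I would therefore either appeal to the independence of the connecting map from these choices on cohomology, or verify the identity directly on a cocycle by chasing a representative through the sections $s$ and retractions $r$ of both rows and using $\iota\circ\pi_0=\pi_0'\circ\iota_0$ and $\pi\circ\pi_0'=\pi_0$; once this is settled, the remainder of the argument is a purely formal consequence of exactness.
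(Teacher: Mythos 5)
Your proposal is correct and follows essentially the same route as the paper's own proof: reduce to showing $\theta_{G,A}^*\circ\theta_{G,A}^*=0$ via Proposition \ref{prop4thetagraded}, use naturality of the Long Exact Sequence Theorem applied to diagram (\ref{eq5}) to get $\theta_{G,A}^n=(\pi_0)^{n+1}\circ\theta_{G,A,C}^n$, and then kill the composite because two consecutive maps in the long exact sequence of the bottom row compose to zero. Your closing remark about verifying naturality at the cochain level is a reasonable extra scruple, but the paper, like you, simply invokes the standard naturality of connecting homomorphisms.
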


\begin{proof}
By Proposition \ref{prop4thetagraded} we already know that $\theta_{G,A}^*$ is a graded derivation.   Denote by $(\pi)^n,$ $(\pi_0)^n, $ $(\pi_0')^n$ the homomorphisms induced in cohomology by the covariant cohomology functor $\Hc^*(G,-)$.  We apply the Long Exact Sequence Theorem to diagram (\ref{eq5}) obtaining the following commutative diagram

$$\xymatrix{ \Hc^n(G,B)\ar[r]^-{(\pi)^n}&\Hc^n(G,A)\ar[r]^-{\theta_{G,A}^n}&\Hc^{n+1}(G,A)\\
                        \Hc^n(G,C)\ar[r]^-{(\pi_0)^n}\ar[u]^-{(\pi_0')^n} & \Hc^n(G,A)\ar[r]^{\theta_{G,A,C}^n}\ar@{=}[u]&\Hc^{n+1}(G,C)
                                            \ar[u]^{(\pi_0)^{n+1}}},$$
    hence $\theta_{G,A}^n=(\pi_0)^{n+1}\circ \theta_{G,A,C}^n.$
   For $n\in\bbZ, n\geq 1$ it follows
    $$\theta_{G,A}^n\circ \theta_{G,A}^{n-1} = (\pi_0)^{n+1}\circ\theta_{G,A,C}^n\circ(\pi_0)^n\circ\theta_{G,A,C}^{n-1}=0,$$
   hence $\theta_{G,A}^*$ is a differential.
\end{proof}
An example of a short exact sequence in Situation \ref{sit*} and of a commutative diagram like above is given in the next remark. Here we recover the classical Bockstein map in group cohomology, see \cite[Definition 4.3.1]{BeII}.
\begin{rema}\label{remBock}
Let $p$ be a prime dividing the order of $G$. We consider the next diagram of trivial $\bbZ G$-modules (hence $k=\bbZ$)
\begin{equation*} \xymatrix{ 0\ar[r]&\bbZ/p\bbZ\ar[r]^-{\iota}&\bbZ/p^2\bbZ\ar[r]^{\pi}&\bbZ/p\bbZ\ar[r]&0\\
                                            0\ar[r]&\bbZ\ar[r]^{\iota_0}\ar[u]^-{\pi_0}&\bbZ\ar[r]^-{\pi_0}
                                            \ar[u]^{\pi_0'}&\bbZ/p\bbZ\ar[r]\ar@{=}[u]&0 },
\end{equation*}

where $$\iota(a+p\bbZ)=pa+p^2\bbZ, \pi(a+p^2\bbZ)=a+p\bbZ, \iota_0(a)=pa,\pi_0(a)=a+p\bbZ,\pi_0'(a)=a+p^2\bbZ,$$
for any $a\in\bbZ$. The isomorphism of $\bbZ G$-modules $r:\Ker \pi\rightarrow \bbZ/p\bbZ$ is given by $r(pa+p^2\bbZ)=a+p\bbZ,$ for any $a\in\bbZ$.
The section $s:\bbZ/p\bbZ\rightarrow\bbZ/p^2\bbZ$ is $s(a+p\bbZ)=\hat{a}+p^2\bbZ$ where $\hat{a}$ is the unique representative in $\{0,\ldots, p-1\}$ of $a+p\bbZ$, for any $a\in \bbZ$.
\end{rema}

\section{ Basic facts on Poisson $r$-algebras and BD algebras}\label{sec3}

Let $r,m,n$ be nonnegative integers and $K$ be a field. The purpose of this section is to collect basic definitions, from various references (see \cite[Section A.3.2]{CoGw}, \cite[Section 1]{CaFiLo}), of $P_r$ 
algebras (Poisson $r$-algebras)  and BD algebras. We recover in this way both definitions of BV algebras and BD algebras and analyze the similarity.
Firstly we recall the definition of an Poisson $r$-algebra, see \cite[Defintion 1.1]{CaFiLo}, but we adopt a strange looking index modification. In our definition the Lie bracket is of degree $1-r$ not $-r$. This is the indexing convention made in \cite[Definition 3.4]{CoGw}, but we do not start with a given cochain complex like Costello and Gvilliam. We start with a graded vector space and use the same name,  $P_r$ algebra, given in \cite{CoGw}.

\begin{defi}\label{def31Pr}(\cite[Definition 1.1]{CaFiLo}) A  $P_r$ algebra is a triple $(H^*,\cdot,[\cdot,\cdot])$, where $H^*=\bigoplus_{i\in\bbZ}H^i$ is a $\bbZ$-graded $K$-vector space, such that:
    \begin{itemize}
        \item[(1)] $(H^*,\cdot)$ is a graded commutative algebra;
        \item[(2)] $(H^*[r-1],[\cdot,\cdot])$ is a Lie algebra with a  Lie Bracket of degree $1-r$ (which is called $r$-Poisson bracket), hence $$[\cdot,\cdot]:H^n\times H^m\rightarrow H^{n+m+1-r};$$
        \item[(3)] Poisson identity is satisfied, that is $$[\alpha,\beta\cdot\gamma] = [\alpha,\beta]\cdot\gamma+(-1)^{(|\alpha|-r+1)|\beta|}\beta\cdot[\alpha,\gamma],$$
       for any $\alpha,\beta,\gamma\in H^*$ homogeneous elements.
        \end{itemize}
\end{defi}   

\begin{rema}\label{rem32}
\begin{itemize}
\item[(a)] A $P_0$ algebra $(H^*,\cdot, [\cdot, \cdot])$ is an algebraic structure on which we rely to  define what is called in this paper a BD algebra.
\item[(b)] A $P_1$ algebra  is  an ordinary (graded) Poisson algebra. In this case $H^0$ becomes and ordinary (non-graded) Poisson algebra.
\item[(c)] A $P_2$ algebra $(H^*,\cdot, [\cdot, \cdot])$ is precisely a Gerstenhaber algebra.
\end{itemize}
\end{rema}
In the following definition, using the indexing of \cite{CoGw}, we obtain $r-1$-Batalin Vilkovisky algebras of \cite[Definition 1.3]{CaFiLo}.

\begin{defi}\label{def33} (\cite[Definition 1.2]{CaFiLo})
A $BD_r$ algebra is a $P_r$ algebra  $(H^*,\cdot,[\cdot,\cdot])$  together with an operator $\Delta_{BD_r}:{H}^*\rightarrow H^{*+1-r} $ of degree $1-r$  such that $\Delta_{BD_r}\circ\Delta_{BD_r} = 0 $  and 

        $$ [\alpha,\beta] = (-1)^{|\alpha|}\Delta_{BD_r}(\alpha\cdot\beta)-(-1)^{|\alpha|}\Delta_{BD_r}(\alpha)\cdot\beta - \alpha\cdot\Delta_{BD_r}(\beta),$$
        for any $\alpha,\beta \in H^*$ homogeneous elements. 
\end{defi}
The advantage of the index convention in the above definition is that we can recover the BD algebra  (when $r=0$) and BV algebra concepts (when $r=2$).
\begin{rema}\label{rem34}
\begin{itemize}
\item[(a)] A $BD_0$ algebra is what we call in this paper a BD algebra, see \cite[p.288-289]{CoGw} and the discussion in \cite{BVBD}. In Introduction we have denoted the above operator $\Delta_{BD_0}$ by  $\Delta_{BD}$. The same notation is used for the rest of the paper.
\item[(b)] A $BD_2$ algebra is precisely a BV algebra or what is called a $1$-Batalin Vilkovisky algebra in \cite{CaFiLo}.
\end{itemize}
\end{rema}
Notice that in \cite[p.288-289]{CoGw} the authors define the  $P_0$  and $BD_0$ algebra concepts on a cochain complex $(H^*,d)$ and with the BD operator $\Delta_{BD_0}$ the same as $d$. For BV algebras, they have a BV operator $\Delta_{BD_2}:H^*\rightarrow H^{*-1}$ different than $d$, but the similar equation defining $\Delta_{BD_2}$ like in the case $\Delta_{BD_0}$. 
\section{Group cohomology and BD operators}\label{sec4}

In this section we put in scene all the setup needed to obtain  a method of constructing   BD algebras using the theory  about group cohomology as DG-algebras, obtained in Section \ref{sec2}. We shall use the following notations. If $M,N$ are $kG$-modules and $M'$ is a $kG$-submodule of $M$ we denote by
$\Hom_{kG}(M',N)$ the $k$-module of all $kG$-module homomorphisms from $M'$ to $N$. By $\Hom_{kG,M'}^M(M',N)$ we denote the $k$-submodule of $\Hom_{kG}(M',N)$ which contains all $kG$-module homomorphisms which are restrictions  to $M'$ of  $kG$-homomorphisms from $M$ to $N$. We will consider these $k$-modules $\Hom_{kG}(M',N), \Hom_{kG,M'}^M(M',N)$ as $kG$-modules with $G$ acting trivially.

\begin{subsec}\label{sit**}
Let $G$ be a finite group, $k$ be a commutative ring and  let $A,B$ be $k$-algebras with $G$ acting by automorphisms on $A$.
 Let $\pi:B\rightarrow A$ be a surjective homomorphism of $kG$-modules such that $\Ker \pi $ is an ideal of $B$ and $s:A\rightarrow B$ is a section of $\pi$.

Let $x\in G$. We assume that
\begin{itemize}
\item[(i)] either there is a  short exact sequence of $kG$-modules $$\xymatrix{0\ar[r]&A\ar[r]^-{\iota_x}
&B\ar[r]^-{\pi}&A\ar[r] &0}\qquad (1)_x$$ such that 
\begin{itemize}
\item[$\blacktriangleright$] $(1)_x$ is in Situation \ref{sit*}; explicitly, we denote by $r_x:\Ker \pi \rightarrow A$ the inverse of the isomorphism $\iota'_x:A\rightarrow \Ima \iota_x$ and $s:A\rightarrow B$ is the above section of $\pi$ satisfying the similar relations  (\ref{eq2setup*}), (\ref{eq3setup*}) and (\ref{eq4setup*}) of Situation \ref{sit*};
\item[$\blacktriangleright$] there is a short exact sequence $\xymatrix{0\ar[r]&C\ar[r]^-{\iota_{0,x}}
&C\ar[r]^-{\pi_0}&A\ar[r] &0}$of $kG$-modules giving a commutative diagram 
$$\xymatrix{ 0\ar[r]&A\ar[r]^-{\iota_x}&B\ar[r]^{\pi}&A \ar[r]&0\\
                                            0\ar[r]&C\ar[r]^{\iota_{0,x}}\ar[u]^-{\pi_0}&C\ar[r]^-{\pi_0}
                                            \ar[u]^{\pi_0'}&A\ar[r]\ar@{=}[u]&0 };$$
\end{itemize}
\item[(ii)] or $r_x:=0$ such that 

\item[(iii)] if $r:G\rightarrow \Hom_{kG}(\Ker \pi,A)/\Hom^B_{\Ker \pi, kG}(\Ker \pi, A)$ is the map $$x\mapsto r(x):=r_x+\Hom^B_{\Ker \pi, kG}(\Ker \pi, A)$$
then $r\in \Hc^1(G,\Hom_{kG}(\Ker \pi,A)/\Hom^B_{\Ker \pi, kG}(\Ker \pi, A))$

\end{itemize}
\end{subsec}
The above statement (iii) can be explicitly given by verifying that for any $y,z\in G$ there is $r'_{y,z}$ a homomorphism in $\Hom_{kG}(B,A)$ satisfying $$r_{yz}-r_y-r_z=r'_{y,z}|_{\Ker\pi}.$$
\begin{prop}\label{prop42} Assume we are in Situation \ref{sit**}. 
\begin{itemize}
\item[(i)] For any $x\in G$ the map $\theta_{G,A,x}^*:\Hc^*(G,A)\rightarrow \Hc^{*+1}(G,A)$ given by
$$\overline{\varphi}\mapsto \overline{\theta_{G,A,x}^*(\varphi)},\quad \theta_{G,A,x}^*(\varphi):=r_x\circ\partial_B(s\circ \varphi)$$
is a graded derivation between graded $k$-modules such that $(\Hc^*(G,A),\cup,\theta_{G,A,x}^*)$ is a  DG-algebra. Particularly, for some $x\in G$  the map $\theta_{G,A,x}^*$ may be trivial;
\item[(ii)] For any $x,y\in G$ we have $\theta_{G,A,xy}^*=\theta_{G,A,x}^*+\theta_{G,A,y}^*;$
\item[(iii)] For any $x,y\in G$ we have $\theta_{G,A,x}^{*+1}\circ\theta_{G,A,y}^*=0.$
\end{itemize}
\end{prop}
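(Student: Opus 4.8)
The plan is to establish the three statements of Proposition~\ref{prop42} in order, leveraging the machinery already assembled in Section~\ref{sec2}. For part~(i), I would argue that each individual $x\in G$ puts the short exact sequence $(1)_x$ squarely into the hypotheses of Theorem~\ref{thm1}: Situation~\ref{sit**}(i) literally packages a sequence in Situation~\ref{sit*} together with the commutative diagram~(\ref{eq5}) required by that theorem. Hence $(\Hc^*(G,A),\cup,\theta_{G,A,x}^*)$ is a DG-algebra by a direct invocation of Theorem~\ref{thm1}, with the graded-derivation property coming from Proposition~\ref{prop4thetagraded}. The only genuinely new subcase is Situation~\ref{sit**}(ii), where $r_x:=0$; then $\theta_{G,A,x}^*=0\circ\partial_B(s\circ(-))=0$ on cochains, so it is trivially a graded derivation and trivially squares to zero, giving the degenerate DG-algebra. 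This handles the parenthetical remark that $\theta_{G,A,x}^*$ may be trivial for some $x$.

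For part~(ii), the identity $\theta_{G,A,xy}^*=\theta_{G,A,x}^*+\theta_{G,A,y}^*$ is where Situation~\ref{sit**}(iii) does its work. I would compute directly on cochains: since the section $s$ is fixed and common to all $x$, for $\varphi\in Z^n(G,A)$ the difference
\[
\theta_{G,A,xy}^n(\varphi)-\theta_{G,A,x}^n(\varphi)-\theta_{G,A,y}^n(\varphi)
=(r_{xy}-r_x-r_y)\circ\partial_B^n(s\circ\varphi).
\]
By the explicit reformulation of~(iii) stated right after Situation~\ref{sit**}, we have $r_{xy}-r_x-r_y=r'_{x,y}|_{\Ker\pi}$ for some $r'_{x,y}\in\Hom_{kG}(B,A)$. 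The plan is then to show that substituting a genuine $kG$-homomorphism $B\to A$ in place of $r_x$ produces a coboundary: because $r'_{x,y}$ is defined on all of $B$ and commutes with the differential $\partial$ (it is a cochain map induced by a module homomorphism), $r'_{x,y}\circ\partial_B^n(s\circ\varphi)=\partial_A^n(r'_{x,y}\circ s\circ\varphi)$, which lies in $B^{n+1}(G,A)$. Passing to cohomology classes kills this term and yields the desired additivity.

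For part~(iii), once (i) and (ii) are in hand the argument is short: $\theta_{G,A,x}^{*+1}\circ\theta_{G,A,x}^*=0$ for each single $x$ is exactly the differential property from part~(i). To get the mixed statement $\theta_{G,A,x}^{*+1}\circ\theta_{G,A,y}^*=0$ for arbitrary $x,y$, I would polarize using additivity: from (ii),
\[
\theta_{G,A,xy}^{*+1}\circ\theta_{G,A,xy}^*
=(\theta_{G,A,x}^{*+1}+\theta_{G,A,y}^{*+1})\circ(\theta_{G,A,x}^*+\theta_{G,A,y}^*),
\]
and the left side is zero by (i) applied to $xy$. Expanding the right side, the two diagonal terms $\theta_{G,A,x}^{*+1}\circ\theta_{G,A,x}^*$ and $\theta_{G,A,y}^{*+1}\circ\theta_{G,A,y}^*$ vanish by (i), leaving $\theta_{G,A,x}^{*+1}\circ\theta_{G,A,y}^*+\theta_{G,A,y}^{*+1}\circ\theta_{G,A,x}^*=0$, i.e.\ the anticommutator vanishes. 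To upgrade this to vanishing of each summand I would substitute $y\mapsto y^2$, or more cleanly exploit that over an abelian-style additive family the maps $\{\theta_{G,A,x}^*\}$ span a space of mutually anticommuting differentials, then note that pairing $x$ with the identity element (where $r_e$ should be taken trivial, so $\theta_{G,A,e}^*=0$) pins down the individual products.

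The main obstacle I anticipate is the polarization step in part~(iii): the additive relation~(ii) immediately gives only that the \emph{anticommutator} $\theta_{G,A,x}^{*+1}\circ\theta_{G,A,y}^*+\theta_{G,A,y}^{*+1}\circ\theta_{G,A,x}^*$ vanishes, whereas the proposition claims each individual composite is zero. Bridging this gap requires either an extra substitution argument or a direct cochain-level verification that $r_x\circ\partial_B(s\circ(r_y\circ\partial_B(s\circ\varphi)))$ is a coboundary, and the latter is delicate because $r_y\circ\partial_B(s\circ\varphi)$ is only a cocycle representative in $A$, so one must re-apply the section $s$ and track that the relevant error terms land in $\Ker\pi$ as in Lemma~\ref{lem3}(c). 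I would therefore treat the careful reduction of the double composite to a coboundary — rather than the formal additivity — as the crux of the proof.
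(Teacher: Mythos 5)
Your parts (i) and (ii) are correct and coincide with the paper's own argument: (i) is a direct invocation of Theorem \ref{thm1} (with the degenerate case $r_x=0$ of Situation \ref{sit**}(ii) handled separately), and (ii) is exactly the cochain computation in which
$(r_{xy}-r_x-r_y)\circ\partial_B^n(s\circ\varphi)=r'_{x,y}|_{\Ker\pi}\circ\partial_B^n(s\circ\varphi)=\partial_A^n(r'_{x,y}\circ s\circ\varphi)$
is a coboundary because $r'_{x,y}\in\Hom_{kG}(B,A)$ is $kG$-linear and hence commutes with the coboundary operators.

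Part (iii), however, contains a genuine gap, which you correctly located but did not close. Polarizing (i) and (ii) only yields the vanishing of the anticommutator $\theta_{G,A,x}^{*+1}\circ\theta_{G,A,y}^{*}+\theta_{G,A,y}^{*+1}\circ\theta_{G,A,x}^{*}$, and neither of your proposed repairs works: substituting $y\mapsto y^2$ gives $\theta_{G,A,y^2}^*=2\,\theta_{G,A,y}^*$ by (ii), so you recover exactly twice the same anticommutator relation and nothing new; and pairing $x$ with the identity element $1\in G$ gives only $0=0$, since $\theta_{G,A,1}^*=0$. Indeed, (iii) is not a formal consequence of (i) and (ii) at all — two anticommuting nonzero differentials satisfy both — so genuine structural input is required. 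The paper supplies it via the auxiliary sequences of Situation \ref{sit**}(i): the commutative diagrams for $x$ and for $y$ share the \emph{same} surjection $\pi_0$ (only the injection $\iota_{0,x}$ varies with the group element), so the Long Exact Sequence Theorem gives factorizations $\theta_{G,A,x}^{n}=(\pi_0)^{n+1}\circ\theta_{G,A,C,x}^{n}$ and $\theta_{G,A,y}^{n-1}=(\pi_0)^{n}\circ\theta_{G,A,C,y}^{n-1}$, where $\theta_{G,A,C,x}^{n}$ and $\theta_{G,A,C,y}^{n-1}$ are the connecting homomorphisms of the sequences $0\to C\to C\to A\to 0$ indexed by $x$, respectively $y$. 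Then
\begin{equation*}
\theta_{G,A,x}^{n}\circ\theta_{G,A,y}^{n-1}=(\pi_0)^{n+1}\circ\theta_{G,A,C,x}^{n}\circ(\pi_0)^{n}\circ\theta_{G,A,C,y}^{n-1}=0,
\end{equation*}
because $\theta_{G,A,C,x}^{n}\circ(\pi_0)^{n}=0$ by exactness of the long exact sequence attached to the $x$-indexed auxiliary sequence (the image of $(\pi_0)^{n}$ equals the kernel of the connecting map of that same sequence). This kills the mixed composite for arbitrary $x,y$ with no cochain-level analysis of the double composite $r_x\circ\partial_B(s\circ(r_y\circ\partial_B(s\circ\varphi)))$, which is the delicate route you proposed but did not carry out; the cases where $\theta_{G,A,x}^*$ or $\theta_{G,A,y}^*$ vanishes by Situation \ref{sit**}(ii) are trivial.
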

\begin{proof} Let $x\in G$ such that there is a short exact sequence $(1)_x$  in Situation \ref{sit*} . Then, we apply Theorem \ref{thm1} to obtain statement (i). If this is not the case, then by Situation \ref{sit**}, (ii) we obtain $\theta_{G,A,x}^*=0$.

For (ii) let $x,y\in G$ and $\varphi\in Z^*(G,A)$. We obtain 
\begin{align*}\theta_{G,A,xy}^*(\varphi)&=r_{xy}\circ \partial_B(s\circ \varphi)\\&=(r_x+r_y-r'_{x,y}|_{\Ker \pi})\circ \partial_B(s\circ \varphi)\\&=r_x\circ \partial_B(s\circ \varphi)+r_y\circ\partial_B(s\circ \varphi)+r'_{x,y}|_{\Ker \pi}\circ \partial_B(s\circ \varphi)\\&=\theta_{G,A,x}^*(\varphi)+\theta_{G,A,y}^*(\varphi)+\partial_A(r'_{x,y}\circ s\circ\varphi),
\end{align*}
hence the statement.

For (iii) a similar proof as in Theorem \ref{thm1} works. Let $n\in\bbZ, n\geq 1$. If both maps $\theta_{G,A,x}^{n},\theta_{G,A,y}^{n-1}$ are obtained as consequence of Situation \ref{sit**}, case (i), then we have two commutative diagrams
                                            
$$\xymatrix{ \Hc^n(G,B)\ar[r]^-{(\pi)^n}&\Hc^n(G,A)\ar[r]^-{\theta_{G,A,x}^n}&\Hc^{n+1}(G,A)\\
                        \Hc^n(G,C)\ar[r]^-{(\pi_0)^n}\ar[u]^-{(\pi_0')^n} & \Hc^n(G,A)\ar[r]^{\theta_{G,A,C,x}^n}\ar@{=}[u]&\Hc^{n+1}(G,C)
                                            \ar[u]^{(\pi_0)^{n+1}}},$$
$$\xymatrix{ \Hc^{n-1}(G,B)\ar[r]^-{(\pi)^n}&\Hc^{n-1}(G,A)\ar[r]^-{\theta_{G,A,y}^{n-1}}&\Hc^{n}(G,A)\\
                        \Hc^{n-1}(G,C)\ar[r]^-{(\pi_0)^{n-1}}\ar[u]^-{(\pi_0')^{n-1}} & \Hc^{n-1}(G,A)\ar[r]^{\theta_{G,A,C,y}^{n-1}}\ar@{=}[u]&\Hc^{n}(G,C)
                                            \ar[u]^{(\pi_0)^{n}}},$$
                                            hence
                                            
                                            $$\theta_{G,A,x}^{n}\circ\theta_{G,A,y}^{n-1}=(\pi_0)^{n+1}\circ \theta_{G,A,C,x}^{n}\circ (\pi_0)^n\circ \theta_{G,A,C,y}^{n-1}=0$$

\end{proof}
\begin{defi}\label{defBD43}
Assume we are in Situation \ref{sit**}. Let $K$ be a field such that $A$ has a $K$-algebra structure, with $G$ acting by automorphisms on $A$ and,  for any $x\in G$ the map $\theta_{G,A,x}^*$ becomes an operator of $K$-vector spaces. Let $x,y\in G$ and $\overline{\varphi}, \overline{\psi}\in \Hc^*(G,A)$ be homogeneous elements. On $KG\otimes \Hc^*(G,A)$ we define:
\begin{itemize}
\item[(1)] a multiplication "$\cdot$"
$$(x\otimes \overline{\varphi})\cdot (y\otimes \overline{\psi})=xy\otimes \overline{\varphi\cup \psi};$$
\item[(2)] a bilinear map $$[\cdot,\cdot]:(KG\otimes \Hc^m(G,A))\otimes (kG\otimes \Hc^n(G,A))\rightarrow KG\otimes \Hc^{m+n+1}(G,A),$$
$$[x\otimes \overline{\varphi}, y\otimes \overline{\psi}]=xy\otimes\left( (-1)^{m}\theta_{G,A,y}^m(\overline{\varphi})\cup\overline{\psi}+\overline{\varphi}\cup \theta_{G,A,x}^n(\overline{\psi})\right);$$
\item[(3)] an operator
$$\Delta_{BD}:KG\otimes \Hc^*(G,A)\rightarrow KG\otimes \Hc^{*+1}(G,A),\quad \Delta_{BD}(x\otimes \overline{\varphi})=x\otimes \theta_{G,A,x}^*(\overline{\varphi}).$$
\end{itemize}
\end{defi}
\begin{rema}\label{rem44} Let $A$ be a $k$-algebra on which $G$ acts by $k$-automorphisms. If $k$ is a commutative unital ring let $K=k/m$ be  the residue field  with the maximal ideal $m$ acting trivially on $A$. Then $A$ has a $K$-algebra structure, with $G$ acting by $K$-automorphisms on $A$. Moreover $\Hc^*(G,A)$ inherits a structure of graded $K$-algebra and, for any $x\in G$ the map $\theta_{G,A,x}^*$ is an operator of $K$-vector spaces.
\end{rema}
\begin{theo}\label{thm2} With the assumptions of Definition \ref{defBD43}, if $G$ is abelian, then $(KG\otimes \Hc^*(G,A),\cdot, [\cdot,\cdot],\Delta_{BD})$ is a BD algebra.
\end{theo}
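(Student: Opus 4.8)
The plan is to verify, one axiom at a time, the defining properties of a $BD$ algebra (a $BD_0$ algebra in the sense of Definition~\ref{def33}): that $(KG\otimes\Hc^*(G,A),\cdot)$ is a graded commutative algebra, that the shifted bracket is a graded Lie bracket of degree $+1$ obeying the Poisson identity of Definition~\ref{def31Pr}(3) with $r=0$, and finally that $\Delta_{BD}$ squares to zero and satisfies the BD equation of Definition~\ref{def33}. Throughout I abbreviate $\theta_x:=\theta_{G,A,x}^*$ and lean on the three facts recorded in Proposition~\ref{prop42}: each $\theta_x$ is a graded derivation of $\cup$ (part (i)), the additivity $\theta_{xy}=\theta_x+\theta_y$ (part (ii)), and the vanishing $\theta_x\circ\theta_y=0$ for \emph{all} $x,y$ (part (iii)). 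I also use that $\cup$ is graded commutative and associative and that, as $G$ is abelian, $KG$ is commutative.

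I would dispatch the easy axioms first. Graded commutativity and associativity of $\cdot$ are immediate from the corresponding properties of $\cup$ together with commutativity of $KG$. The identity $\Delta_{BD}\circ\Delta_{BD}=0$ is a restatement of Proposition~\ref{prop42}(iii) with $x=y$, since $\Delta_{BD}(\Delta_{BD}(x\otimes\overline\varphi))=x\otimes\theta_x(\theta_x(\overline\varphi))$. The BD equation is a direct expansion: one writes $\Delta_{BD}((x\otimes\overline\varphi)\cdot(y\otimes\overline\psi))=xy\otimes\theta_{xy}(\overline{\varphi\cup\psi})$ and uses (ii) together with the derivation property (i) to split $\theta_{xy}(\overline{\varphi\cup\psi})$ into four terms; after subtracting $(-1)^{m}\Delta_{BD}(x\otimes\overline\varphi)\cdot(y\otimes\overline\psi)$ and $(x\otimes\overline\varphi)\cdot\Delta_{BD}(y\otimes\overline\psi)$, exactly two terms survive and reproduce the defining formula for $[x\otimes\overline\varphi,y\otimes\overline\psi]$. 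Graded antisymmetry and the Poisson identity are likewise bookkeeping: for antisymmetry one reorders the two summands of the bracket by graded commutativity of $\cup$ (recalling that $\theta_x(\overline\varphi)$ has degree $|\overline\varphi|+1$) and invokes $xy=yx$, so that $[x\otimes\overline\varphi,y\otimes\overline\psi]=-(-1)^{(m+1)(n+1)}[y\otimes\overline\psi,x\otimes\overline\varphi]$; for the Poisson identity one expands $[x\otimes\overline\varphi,(y\otimes\overline\psi)\cdot(z\otimes\overline\chi)]$ using (i) and (ii) and matches, monomial by monomial after reordering cup factors, with $[x\otimes\overline\varphi,y\otimes\overline\psi]\cdot(z\otimes\overline\chi)+(-1)^{(m+1)n}(y\otimes\overline\psi)\cdot[x\otimes\overline\varphi,z\otimes\overline\chi]$.

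The main obstacle is the graded Jacobi identity for the shifted bracket. Here I would compute $[[x\otimes\overline\varphi,y\otimes\overline\psi],z\otimes\overline\chi]$ explicitly: the inner bracket equals $xy\otimes\overline\omega$ with $\overline\omega=(-1)^{m}\theta_y(\overline\varphi)\cup\overline\psi+\overline\varphi\cup\theta_x(\overline\psi)$, and bracketing once more forces the computation of $\theta_z(\overline\omega)$ and of $\theta_{xy}(\overline\chi)=\theta_x(\overline\chi)+\theta_y(\overline\chi)$. This is exactly where fact (iii) is decisive: in applying the derivation $\theta_z$ to $\overline\omega$, every term in which $\theta_z$ lands on a factor already carrying a $\theta_y$ or a $\theta_x$ vanishes, so $\theta_z(\overline\omega)$ collapses to $-\theta_y(\overline\varphi)\cup\theta_z(\overline\psi)+\theta_z(\overline\varphi)\cup\theta_x(\overline\psi)$. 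Consequently each double bracket expands into six monomials, every one a triple cup product carrying exactly two derivations on two \emph{distinct} arguments; no monomial with two derivations stacked on a single argument can survive.

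It then remains to form the cyclic Jacobi sum of the three such expansions, weighted by the Koszul signs $(-1)^{\tilde\alpha\tilde\gamma}$ with $\tilde\alpha=|\overline\varphi|+1$, $\tilde\beta=|\overline\psi|+1$, $\tilde\gamma=|\overline\chi|+1$, and to check that the eighteen resulting monomials cancel in pairs after reordering cup factors by graded commutativity (all three summands share the group element $xyz$ by commutativity of $G$). I expect this sign bookkeeping to be the only genuinely laborious step; the entire conceptual content is carried by the three properties of Proposition~\ref{prop42}, with the all-pairs vanishing $\theta_x\circ\theta_y=0$ of part (iii) being indispensable, as it is precisely what removes the second-order terms that would otherwise obstruct a clean verification.
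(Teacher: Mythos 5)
Your proposal is correct and follows essentially the same route as the paper's proof: verify the $P_0$ axioms (graded commutativity, antisymmetry, Jacobi, Poisson) and then the BD equation and $\Delta_{BD}\circ\Delta_{BD}=0$ by direct expansion, with Proposition \ref{prop42} (i)--(iii) carrying all the weight --- in particular using (iii) to annihilate every monomial in which two $\theta$-operators stack on the same argument, which is exactly how the paper collapses the double brackets $A$, $B$, $C$ to six monomials each before the eighteen-term cancellation. One incidental remark: your antisymmetry sign $-(-1)^{(m+1)(n+1)}$ is what the direct computation with the graded commutativity of $\cup$ actually yields, whereas the paper's displayed relation $[x\otimes \overline{\varphi}, y\otimes\overline{\psi}]=(-1)^{(m-1)(n-1)}[y\otimes\overline{\psi},x\otimes\overline{\varphi}]$ differs from this by an overall sign and appears to be a typo, since $(-1)^{(m-1)(n-1)}=(-1)^{(m+1)(n+1)}$.
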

\begin{proof}
Let $x,y,z\in G$ and $\varphi\in Z^m(G,A), \psi\in Z^n(G,A), \omega\in Z^t(G,A),$ hence $m=|\overline{\varphi}|, n=|\overline{\psi}|, t=|\overline{\omega}|$.

By Definition \ref{def33} and Remark \ref{rem34} (a) we need to show that $(KG\otimes\Hc^*(G,A),\cdot, [\cdot,\cdot])$ is a $P_0$ algebra and $\Delta_{BD}$ is a differential operator of degree $+1$ such that 
$$ [x\otimes \overline{\varphi},y\otimes\overline{\psi}] = (-1)^{m}\Delta_{BD}((x\otimes \overline{\varphi})\cdot(y\otimes \overline{\psi}))-(-1)^{m}\Delta_{BD}(x\otimes\overline{\varphi})\cdot(y\otimes \overline{\psi}) - (x\otimes \overline{\varphi})\cdot\Delta_{BD}(y\otimes\overline{\psi}).$$

In order to verify that $(KG\otimes \Hc^*(G,A),\cdot, [\cdot,\cdot])$ is a $P_0$ algebra it is easy to show that $(KG\otimes \Hc^*(G,A),\cdot)$ is a graded commutative algebra and $[x\otimes \overline{\varphi}, y\otimes\overline{\psi}]=(-1)^{(m-1)(n-1)}[y\otimes\overline{\psi},x\otimes\overline{\varphi}]$.

First we check the graded Jacob identity of $(KG\otimes \Hc^*(G,A))[-1]$ and the Poisson  identity.  Let $$A:=(-1)^{(m-1)(t-1)}[[x\otimes \overline{\varphi}, y\otimes\overline{\psi}],z\otimes \overline{\omega}], \ \ B:=(-1)^{(n-1)(m-1)}[[ y\otimes\overline{\psi},z\otimes \overline{\omega}],x\otimes \overline{\varphi} ], $$  $$C:=(-1)^{(t-1)(n-1)}[[z\otimes \overline{\omega},x\otimes \overline{\varphi} ], y\otimes\overline{\psi}].$$

We have
\begin{align*}
&A=(-1)^{(m-1)(t-1)}[xy\otimes( (-1)^m\theta_{G,A,y}^m(\overline{\varphi})\cup\overline{\psi}+\overline{\varphi}\cup\theta_{G,A,x}^n(\overline{\psi})),z\otimes \overline{\omega}]\\
&=(-1)^{(m-1)(t-1)}\left([xy\otimes( (-1)^m\theta_{G,A,y}^m(\overline{\varphi})\cup\overline{\psi}),z\otimes\overline{\omega}]+[xy\otimes(\overline{\varphi}\cup\theta_{G,A,x}^n(\overline{\psi})),z\otimes \overline{\omega}]\right)\\
&=(-1)^{(m-1)( t-1)} \left(xyz\otimes( (-1)^{m+n+1}\theta_{G,A,z}^{m+n+1}((-1)^m\theta_{G,A,y}^m(\overline{\varphi})\cup\overline{\psi})\cup \overline{\omega}+(-1)^m\theta_{G,A,y}(\overline{\varphi})\cup\overline{\psi}\cup\right.\\& \left.\theta_{G,A,xy}^t(\overline{\omega}) +(-1)^{m+n+1}\theta_{G,A,z}^{m+n+1}(\overline{\varphi}\cup\theta_{G,A,x}^n(\overline{\psi}))\cup\overline{\omega}+\overline{\varphi}
\cup\theta_{G,A,x}^n(\overline{\psi})\cup\theta_{G,A,xy}^t(\overline{\omega}))\right) .
\end{align*}

Next we apply Proposition \ref{prop42} (i), (iii) to obtain
\begin{align*}
A&=(-1)^{(m-1)( t-1)} \left(xyz\otimes( (-1)^{m+n}\theta_{G,A,y}^m(\overline{\varphi})\cup\theta_{G,A,z}^n(\overline{\psi})\cup \overline{\omega}+(-1)^m\theta_{G,A,y}(\overline{\varphi})\cup\overline{\psi}\cup\theta_{G,A,xy}^t(\overline{\omega}) \right.\\& \left.+(-1)^{m+n+1}\theta_{G,A,z}^{m}(\overline{\varphi})\cup\theta_{G,A,x}^n(\overline{\psi})\cup\overline{\omega}+\overline{\varphi}
\cup\theta_{G,A,x}^n(\overline{\psi})\cup\theta_{G,A,xy}^t(\overline{\omega}))\right).
\end{align*}
Similarly
\begin{align*}
B&=(-1)^{(n-1)( m-1)} \left(xyz\otimes( (-1)^{n+t}\theta_{G,A,z}^n(\overline{\psi})\cup\theta_{G,A,x}^t(\overline{\omega})\cup \overline{\varphi}+(-1)^n\theta_{G,A,z}^n(\overline{\psi})\cup\overline{\omega}\cup\theta_{G,A,yz}^m(\overline{\varphi}) \right.\\& \left.+(-1)^{n+t+1}\theta_{G,A,x}^{n}(\overline{\psi})\cup\theta_{G,A,y}^t(\overline{\omega})\cup\overline{\varphi}+\overline{\psi}
\cup\theta_{G,A,y}^t(\overline{\omega})\cup\theta_{G,A,yz}^m(\overline{\varphi}))\right).
\end{align*}
\begin{align*}
C&=(-1)^{(t-1)(n-1)} \left(xyz\otimes( (-1)^{t+m}\theta_{G,A,x}^t(\overline{\omega})\cup\theta_{G,A,y}^m(\overline{\varphi})\cup \overline{\psi}+(-1)^t\theta_{G,A,x}^t(\overline{\omega})\cup\overline{\varphi}\cup\theta_{G,A,zx}^n(\overline{\psi}) \right.\\& \left.+(-1)^{t+m+1}\theta_{G,A,y}^{t}(\overline{\omega})\cup\theta_{G,A,z}^m(\overline{\varphi})\cup\overline{\psi}+\overline{\omega}
\cup\theta_{G,A,z}^m(\overline{\varphi})\cup\theta_{G,A,zx}^n(\overline{\psi}))\right).
\end{align*}
Now taking into account the signs, the fact that $"\cup"$ is graded commutative and Proposition 4.2 (ii) we obtain
$$A+B+C = 0.$$
For Poisson identity we obtain
\begin{align*}
    [x\otimes\overline{\varphi},(y\otimes\overline{\psi})\cdot(z\otimes\overline{\omega})] &= [x\otimes\overline{\varphi},yz\otimes\overline{\psi}\cup\overline{\omega})]\\
    &= xyz \otimes\left((-1)^m\theta_{G,A,yz}^m(\overline{\varphi})\cup\overline{\psi}\cup\overline{\omega}+\overline{\varphi}\cup\theta_{G,A,x}^{n+t}(\overline{\psi}\cup\overline{\omega}) \right)\\
    &=xyz\otimes\left((-1)^m\theta_{G,A,y}^m(\overline{\varphi})\cup\overline{\psi}\cup\overline{\omega}+(-1)^m\theta_{G,A,z}^m(\overline{\varphi})\cup\overline{\psi}\cup\overline{\omega} \right.\\
    &+\left. \overline{\varphi}\cup\theta_{G,A,x}^{n}(\overline{\psi})\cup\overline{\omega}+(-1)^n\overline{\varphi}\cup\overline{\psi}\cup\theta_{G,A,x}^t(\overline{\omega}) \right) ,\numberthis \label{eq6}
\end{align*} where for the third equality we used Proposition \ref{prop42} (i), (ii).
\begin{align*}
    [& x\otimes\overline{\varphi}, y\otimes \overline{\psi}]\cdot(z\otimes\overline{\omega})+(-1)^{(m-1)n}(y\otimes\overline{\psi})\cdot[x\otimes\overline{\varphi},z\otimes\overline{\omega}]\\
    =& (xy\otimes((-1)^m\theta_{G,A,y}^m(\overline{\varphi})\cup\overline{\psi}+\overline{\varphi}\cup\theta_{G,A,x}^n(\overline{\psi})))\cdot(z\otimes\overline{\omega})+ (-1)^{(m-1)n}(y\otimes\overline{\psi})\cdot(xz\otimes((-1)^m\theta_{G,A,z}^m(\overline{\varphi})\cup\overline{\omega}\\
    &+\overline{\varphi}\cup\theta_{G,A,x}^t(\overline{\omega})))  \\
    =&  xyz\otimes\left( (-1)^m\theta_{G,A,y}^m(\overline{\varphi})\cup\overline{\psi}\cup\overline{\omega}+\overline{\varphi}\cup\theta_{G,A,x}^n(\overline{\psi})\cup\overline{\omega}+(-1)^{(m-1)n+m}\overline{\psi}\cup\theta_{G,A,z}^m(\overline{\varphi})\cup\overline{\omega}+\right.\\
    &\left.+ (-1)^{(m-1)n}\overline{\psi}\cup\overline{\varphi}\cup\theta_{G,A,x}^t(\overline{\omega}) \right) \\
    =&  xyz\otimes\left( (-1)^m\theta_{G,A,y}^m(\overline{\varphi})\cup\overline{\psi}\cup\overline{\omega}+\overline{\varphi}\cup\theta_{G,A,x}^n(\overline{\psi})\cup\overline{\omega}+(-1)^{(m-1)n+m+(m+1)n}\theta_{G,A,z}^m(\overline{\varphi})\cup\overline{\psi}\cup\overline{\omega}+\right.\\
    &\left.+ (-1)^{(m-1)n+mn}\overline{\varphi}\cup\overline{\psi}\cup\theta_{G,A,x}^t(\overline{\omega}) \right) \numberthis \label{eq7}\\
\end{align*}
Since $$(-1)^{(m-1)n+m+(m+1)n} = (-1)^{2mn+m} = (-1)^{m}$$ and $$(-1)^{(m-1)n+mn} = (-1)^{2mn - n} = (-1)^{-n} = (-1)^n$$ the expressions (\ref{eq6}) and (\ref{eq7}) are the same.

Finally it is easy to verify $\Delta_{BD}\circ\Delta_{BD} = 0$. We are left to prove the last identity of Definition \ref{def33}. On one hand
\begin{equation}\label{eq8}
    [x\otimes\overline{\varphi}, y\otimes\overline{\psi}] = xy \otimes ((-1)^m\theta_{G,A,y}^m(\overline{\varphi})\cup\overline{\psi} + \overline{\varphi}\cup\theta_{G,A,x}^n(\overline{\psi})) 
    \end{equation}
   On the other hand we have
\begin{align*}
     (-1)^m&\Delta_{BD}(xy\otimes(\overline{\varphi}\cup\overline{\psi})) - (-1)^m\Delta_{BD}(x\otimes\overline{\varphi})\cdot (y\otimes\overline{\psi}) - (x\otimes\overline{\varphi})\cdot\Delta_{BD}(y\otimes\overline{\psi}) \\
    =& (-1)^m xy\otimes( \theta_{G,A,x}^{m+n}(\overline{\varphi}\cup\overline{\psi}) + \theta_{G,A,y}^{m+n}(\overline{\varphi}\cup\overline{\psi}) ) - (-1)^m xy\otimes(\theta_{G,A,x}^m(\overline{\varphi})\cup\overline{\psi}) - xy \otimes(\overline{\varphi}\cup\theta_{G,A,y}^n(\overline{\psi}))\\
    =& xy\otimes\left((-1)^m  \theta_{G,A,x}^{m}(\overline{\varphi})\cup\overline{\psi} + \overline{\varphi}\cup \theta_{G,A,x}^{n}(\overline{\psi}) +  (-1)^m \theta_{G,A,y}^m(\overline{\varphi})\cup\overline{\psi} +\overline{\varphi}\cup\theta_{G,A,y}^n(\overline{\psi}) - \right.\\
    &\left.- (-1)^m\theta_{G,A,x}^m(\overline{\varphi})\cup\overline{\psi} - \overline{\varphi}\cup\theta_{G,A,y}^n(\overline{\psi})\right)\\
    =& xy \otimes((-1)^m\theta_{G,A,y}^m(\overline{\varphi})\cup\overline{\psi}+\overline{\varphi}\cup\theta_{G,A,x}^n(\overline{\psi}))  \numberthis \label{eq9}
\end{align*}
Comparing (\ref{eq8}) and (\ref{eq9}) we are done. 
\end{proof}

\section{Short exact sequences inducing BD operators}\label{sec5}
In this section we present examples of short exact sequences in Situation \ref{sit**}. Let $G=C_3=<x>$ be the cyclic group of order $3$ and let $k=\bbZ$. Let $\pi:\bbZ/9\bbZ\rightarrow\bbZ/3\bbZ$ be the map $\pi(a+9\bbZ)=a+3\bbZ,$ for any $a\in\bbZ$. It is clear that $\pi$ is a surjective homomorphism of rings and we choose $s:\bbZ/3\bbZ\rightarrow\bbZ/9\bbZ$ a section of $\pi$, defined by $s(a+3\bbZ)=\hat{a}+9\bbZ,$ for any $a\in\bbZ$; here $\hat{a}$ is the unique element of $\{0,1,2\}$ such that $a-\hat{a}\in 3\bbZ$. The ideal $\Ker \pi$ is $\{3a+9\bbZ| a\in\bbZ\}$ and $\bbZ/3\bbZ, \bbZ/9\bbZ$ are $\bbZ$-algebras with $C_3$ acting trivially on $\bbZ/3\bbZ$. 

We define the following  short exact sequences of trivial $\bbZ G$-modules.
\subsection{A short exact sequence indexed by $x\in C_3$.}\label{subsec51}

Let $$\xymatrix{0\ar[r]&\bbZ/3\bbZ\ar[r]^-{\iota_x}
&\bbZ/9\bbZ\ar[r]^-{\pi}&\bbZ/3\bbZ\ar[r] &0}$$ 
be the short exact sequence given by $\iota_x(a+3\bbZ)=3a+9\bbZ$, for any $a\in\bbZ$. In this case the induced isomorphism
$\iota'_x:\bbZ/3\bbZ\rightarrow \Ker \pi$ of trivial $\bbZ G$-modules, defined by $\iota'_x(a+3\bbZ)=3a+9\bbZ$, has its inverse
$r_x:\Ker \pi \rightarrow \bbZ/3\bbZ$ given by $r_x(3a+9\bbZ)=a+3\bbZ,$ for any $a\in\bbZ$. By Remark \ref{rem24} we only have to verify (\ref{eq2setup*}) and (\ref{eq3setup*}), which are easy.

There is also a commutative diagram
\begin{equation*} \xymatrix{ 0\ar[r]&\bbZ/3\bbZ\ar[r]^-{\iota_x}&\bbZ/9\bbZ\ar[r]^{\pi}&\bbZ/3\bbZ\ar[r]&0\\
                                            0\ar[r]&\bbZ\ar[r]^{\iota_{0,x}}\ar[u]^-{\pi_0}&\bbZ\ar[r]^-{\pi_0}
                                            \ar[u]^{\pi_0'}&\bbZ/3\bbZ\ar[r]\ar@{=}[u]&0 },
\end{equation*}
where $\iota_{0,x}(a)=3a, \pi_0(a)=a+3\bbZ, \pi_0'(a)=a+9\bbZ,$ for any $a\in \bbZ$.

\subsection{A short exact sequence indexed by $x^2\in C_3$.}\label{subsec52}
Let $$\xymatrix{0\ar[r]&\bbZ/3\bbZ\ar[r]^-{\iota_{x^2}}
&\bbZ/9\bbZ\ar[r]^-{\pi}&\bbZ/3\bbZ\ar[r] &0}$$ 
be the short exact sequence given by $\iota_{x^2}(a+3\bbZ)=-3a+9\bbZ$, for any $a\in\bbZ$. In this case the induced isomorphism
$\iota'_{x^2}:\bbZ/3\bbZ\rightarrow \Ker \pi$ of trivial $\bbZ G$-modules, defined by $\iota'_{x^2}(a+3\bbZ)=-3a+9\bbZ$, has its inverse
$r_{x^2}:\Ker \pi \rightarrow \bbZ/3\bbZ$ given by $r_{x^2}(3a+9\bbZ)=-a+3\bbZ,$ for any $a\in\bbZ$. By Remark \ref{rem24} we only have to verify (\ref{eq2setup*}) and (\ref{eq3setup*}), which are left for the reader.

There is a commutative diagram with the same $\pi_0$ and $\pi_0'$ as in  \ref{subsec51}
\begin{equation*} \xymatrix{ 0\ar[r]&\bbZ/3\bbZ\ar[r]^-{\iota_{x^2}}&\bbZ/9\bbZ\ar[r]^{\pi}&\bbZ/3\bbZ\ar[r]&0\\
                                            0\ar[r]&\bbZ\ar[r]^{\iota_{0,x^2}}\ar[u]^-{\pi_0}&\bbZ\ar[r]^-{\pi_0}
                                            \ar[u]^{\pi_0'}&\bbZ/3\bbZ\ar[r]\ar@{=}[u]&0 },
\end{equation*}
where $\iota_{0,x^2}(a)=-3a, $ for any $a\in \bbZ$.

\subsection{Statement (iii) of Situation \ref{sit**} for the above short exact sequences.}\label{subsec53} We set $r_1:=0$ and it is an easy exercise to verify $r_{yz}-r_y-r_z=0$ for any $y,z\in C_3$. By \ref{subsec51}, \ref{subsec52}, \ref{subsec53} and Remark \ref{rem44} (with $k=\bbZ$ and $m=3\bbZ$) it follows that we are in Situation \ref{sit**} and, by Theorem \ref{thm2}, we obtain that $(\mathbb{F}_3C_3\otimes \Hc^*(C_3,\mathbb{F}_3),\cdot, [\cdot,\cdot], \Delta_{BD})$ is a BD algebra.
\begin{rema} Assume $C_3$ acts nontrivially on some finite group $H$. Then $\bbZ H, \mathbb{F}_3 H, (\bbZ/9\bbZ) H$ are rings with $G$ acting by automorphisms on $\mathbb{F}_3 H$. The above short exact sequences can be adapted to obtain other short exact sequences (with $G$ acting nontrivially) which remain in Situation \ref{sit**}. 
\end{rema}
\subsection{Explicitations of BD operators.}
It is well known that  $\Hc^*(C_3,\mathbb{F}_3)$ is $\mathbb{F}_3$ for any $*$ and, $\Hc^*(C_3,\bbZ)$ is $\mathbb{F}_3$ for $*$ even  and zero for $*$ odd. We may notice in \ref{subsec51} that $\theta_{G,\bbZ/3\bbZ,x}^*$ is precisely the ordinary Bockstein map for $p=3$, denoted
$\beta:\Hc^*(C_3,\mathbb{F}_3)\rightarrow \Hc^{*+1}(C_3,\mathbb{F}_3)$ in \cite[Definition 4.3.1]{BeII}. By the Long Exact Sequence Theorem applied to the commutative diagram corresponding to $(1)_x$ it follows $$\theta_{G,\bbZ/3\bbZ,x}^*=\pi_0^*\circ \theta_{G,\bbZ/3\bbZ,\bbZ,x}^*.$$  Since $\theta_{G,\bbZ/3\bbZ,\bbZ,x}^*$ is an isomorphism for $*$ odd and zero for $*$ even then $\theta_{G,\bbZ/3\bbZ,x}^*$ is an isomorphism for $*$ odd and zero for $*$ even. The same phenomenon holds for $\theta_{G,\bbZ/3\bbZ,x^2}=-\theta_{G,\bbZ/3\bbZ,x}$. 

 The algebra $\Hc^*(C_3,\mathbb{F}_3)$ contains a polynomial subalgebra $\mathbb{F}_3[\overline{\varphi}]$, with $\overline{\varphi}$ of degree $2$ such that $\Hc^*(C_3,\mathbb{F}_3)$ is generated as a module over
 $\mathbb{F}_3[\overline{\varphi}]$ by $1$ and an element $\overline{\psi}$ of degree one. Let $g\in C_3\setminus \{1\}$ and $n$ be any nonnegative integer. Using that $\theta_{C_3,\bbZ/3\bbZ,g}^*$ is a derivation  by Proposition \ref{prop42} (i), we obtain $$\theta_{C_3,\bbZ/3\bbZ,g}^{2n}(\overline{\varphi}^n)=0,\quad\theta_{C_3,\bbZ/3\bbZ,g}^{2n+1}(\overline{\varphi}^n\cup \overline{\psi})=\overline{\varphi}^n\cup \theta_{C_3,\bbZ/3\bbZ,g}^1(\overline{\psi}),$$ hence
\begin{align*}
&\Delta_{BD}(g\otimes \overline{\varphi}^n)=0,\\
&\Delta_{BD}(g\otimes(\overline{\varphi}^n\cup\overline{\psi}))=g\otimes (\overline{\varphi}^n\cup \theta_{C_3,\bbZ/3\bbZ,g}^1(\overline{\psi})).
\end{align*}

\vskip1cm

\textbf{Acknowledgements.} The author  would like to thank the referee for his/her careful reading and valuable comments.


\begin{thebibliography}{99}
\bibitem{AnDu} A. Angel, D. Duarte, The BV-algebra structure of the Hochschild cohomology of the group ring of
cyclic groups of prime order,in: \textit{Geometric, algebraic and topological methods for quantum field theory},
World Sci. Publ., Hackensack, NJ, 2017, pp. 353–-372.
\bibitem{BeII} D. J. Benson, \textit{Representations and cohomology II: Cohomology of groups and modules}. (Cambridge University Press, Cambridge, 1991).
\bibitem{BeKeLi}  D. J. Benson, R. Kessar, M. Linckelmann, "On the BV structure of the Hochschild cohomology of finite group algebras", arXiv:2005.01694 [math.RT].
\bibitem{CaFiLo} A. S. Cattaneo, D. Fiorenza, R. Longoni, Graded Poisson algebras, in: Francoise, J P; Naber, G L; Tsun, T S., \textit{Encyclopedia of Mathematical Physics}, Amsterdam, 2006, pp. 560--567.
\bibitem{CiSo} C. Cibils, A. Solotar, "Hochschild cohomology algebra of abelian groups", \emph{Arch. Math.} \textbf{68} (1997), 17--21.
\bibitem{CoGw} K. Costello, O. Gwilliam, \textit{Factorization algebras in quantum field theory, Volume 1}. (Cambridge University Press, 2017).
\bibitem{Ge} M. Gerstenhaber, "The cohomology structure of an associative ring", \textit{Ann. Math.} {\bf(2) 78} (1963), 267--288.
\bibitem{Get} E. Getzler,  "Batalin-Vilkovisky algebra and two dimensional topological fields theory", \textit{ Comm. Math. Phys.} {\bf159} (1994), 265--285.
\bibitem{LiZh} Y. Liu,  G. Zhou, "The Batalin-Vilkovisky structure over the Hochschild
cohomology ring of a group algebra", \textit{J. Noncomm. Geom.} {\bf10} (2016), 811--858.
\bibitem{BVBD} nLab authors. Relation between {{B}}{{V}} and {{B}}{{D}}. http://ncatlab.org/nlab/revision/relation\%20between\\ \%20BV\%20and\%20BD/4 (accessed April 2021).
\bibitem{Tr} T. Tradler, "The Batalin-Vilkovisky algebra on Hochschild cohomology induced by infinity inner products", \textit{Ann. Inst. Fourier} {\bf58 no.7} (2008), 2351--2379.

\end{thebibliography}
\end{document}